\documentclass[11pt]{article}

\usepackage[a4paper,margin=1in]{geometry}
\usepackage{amsmath,amssymb,amsthm,mathtools}
\usepackage{tikz}
\usepackage{microtype}
\usepackage[colorlinks=true,linkcolor=blue,citecolor=blue,urlcolor=blue]{hyperref}

\newtheorem{theorem}{Theorem}

\newcommand{\XX}{\mathcal X}

\newcommand{\EE}{\mathcal E}

\newcommand{\mdim}{\operatorname{mdim}}

\title{A Variational Principle for Metric Mean Dimension via Lower Brin–Katok Local Entropy}
\author{Ruxi Shi\\[2mm]
\textit{Shanghai Center for Mathematical Science, Fudan University}\\
\texttt{ruxishi@fudan.edu.cn}}
\date{}

\begin{document}
\maketitle

\begin{abstract}
We prove a finite-scale comparison between lower Brin--Katok local entropy and Katok covering entropy.  Let $(\XX,d,T)$ be a compact metric topological dynamical system and let $\mu$ be ergodic.  Then, for every $\epsilon>0$ and every $\delta\in(0,1)$,
\[
        h^K_\mu(6\epsilon,\delta)\leq \underline h^{BK}_\mu(\epsilon).
\]
Combining this estimate with the usual Katok-type variational principle for metric mean dimension gives the corresponding variational principle with lower Brin--Katok local entropy.
\end{abstract}

\section{Introduction}

The purpose of this paper is to answer two finite-scale questions concerning lower Brin--Katok local entropy and metric mean dimension, which were formulated by Shi \cite{Shi21}.  Throughout, $\EE_T(\XX)$ denotes the set of ergodic $T$-invariant probability measures on $\XX$.  By a \emph{compact metric topological dynamical system} we mean a triple $(\XX,d,T)$ where $(\XX,d)$ is a compact metric space and $T:\XX\to\XX$ is a continuous map.

\subsection{Background}

Mean dimension was introduced by Gromov \cite{Gro99} and later developed by Lindenstrauss and Weiss \cite{LW00}, who also introduced metric mean dimension as a scale-sensitive analogue.  For a compact metric dynamical system $(\XX,d,T)$, the Bowen metric
\[
        d_n(x,y)=\max_{0\le j<n}d(T^j x,T^j y)
\]
induces, at each scale $\epsilon>0$, a finite-scale topological entropy
\[
        S(\epsilon)=\limsup_{n\to\infty}\frac1n\log \text{\rm Cov}(n,\epsilon),
\]
where $\text{\rm Cov}(n,\epsilon)$ denotes the least number of $d_n$-balls of radius $\epsilon$ needed to cover $\XX$.  The upper and lower metric mean dimensions are obtained by normalizing $S(\epsilon)$ and $\underline S(\epsilon)$ by $\log(1/\epsilon)$ and letting $\epsilon\downarrow0$.  This invariant is especially useful for systems whose topological entropy is infinite but whose finite-scale covering growth has a meaningful first-order rate.

The classical variational principle identifies topological entropy with the supremum of measure-theoretic entropy over invariant measures.  For metric mean dimension the analogous question is subtler for two reasons.  First, metric mean dimension depends on the chosen metric.  Second, the relevant measure-theoretic objects must retain the scale $\epsilon$; ordinary entropy $h_\mu(T)$ is usually infinite or too coarse to see the normalization by $\log(1/\epsilon)$.  The resulting theory therefore seeks finite-scale measure-theoretic entropies whose supremum over invariant measures recovers the finite-scale topological covering growth after passage to the metric mean dimension limit.

A major development was the rate-distortion variational principle of Lindenstrauss and Tsukamoto \cite{LT18}, which connected metric mean dimension with information-theoretic rate-distortion functions.  In that framework a measure is viewed as a stationary stochastic process and one asks how many bits per iterate are needed to describe a typical orbit with distortion at most $\epsilon$.  Lindenstrauss and Tsukamoto later developed a double variational principle, involving optimization over both metrics and measures, for mean dimension itself under the marker property \cite{LT19}.  Tsukamoto also introduced mean dimension with potential and proved a corresponding double variational principle, a mean-dimension analogue of the thermodynamic formalism for pressure \cite{Tsu20}.

Another line of progress replaces rate-distortion quantities by more geometric finite-scale entropies.  Velozo and Velozo obtained a variational principle for metric mean dimension using a Katok-type covering entropy related to measure-theoretic entropy at scale $\epsilon$ \cite{VV17}.  Gutman and \'{S}piewak proved that, in the Lindenstrauss--Tsukamoto variational principle, it is enough to take the supremum over ergodic measures; they also developed a formulation using partitions of decreasing diameter, related to dynamical R\'{e}nyi information dimension, and obtained a lower bound in terms of Brin--Katok local entropy \cite{GS21}.  Shi then established several variational principles for metric mean dimension in terms of Shapira entropy of finite open covers, Katok entropy, and Brin--Katok local entropy \cite{Shi21}.  In particular, Shi proved that for every fixed $\delta\in(0,1)$ the upper and lower metric mean dimensions can be recovered from
\[
        \sup_{\mu\in\EE_T(\XX)} h^K_\mu(\epsilon,\delta)
        \quad\text{and}\quad
        \sup_{\mu\in\EE_T(\XX)} h^{BK}_\mu(\epsilon)
\]
after the usual normalization by $\log(1/\epsilon)$.

These ideas have been further extended to countable amenable group actions \cite{CDZ22}, relative metric mean dimension with potential \cite{Wu23}, Bowen metric mean dimension for subsets \cite{Wang21}, and random dynamical systems \cite{WCY23}.  Together, these developments show that finite-scale local and covering entropies provide a robust measure-theoretic language for metric mean dimension.

\subsection{The problem}

The present paper concerns a lower-time version of this language.  The Brin--Katok local entropy formula expresses measure-theoretic entropy through the exponential decay of the measures of Bowen balls along typical orbits \cite{BK83}.  Katok's covering entropy gives a complementary global formulation: one counts the least number of Bowen balls needed to cover a set of prescribed positive $\mu$-measure \cite{Katok80}.  In the classical entropy limit these two viewpoints are compatible.  At a fixed scale, however, one must distinguish the upper local quantity
\[
        h^{BK}_\mu(x,\epsilon)=
        \limsup_{n\to\infty}-\frac1n\log\mu(B_n(x,\epsilon))
\]
from the lower local quantity
\[
        \underline h^{BK}_\mu(x,\epsilon)=
        \liminf_{n\to\infty}-\frac1n\log\mu(B_n(x,\epsilon)).
\]
For an ergodic measure these are almost everywhere constant, but they may behave very differently at finite scale.

The lower Brin--Katok quantity is not covered by the existing upper-time arguments.  The difficulty is that a lower local entropy bound gives information only along infinitely many good lengths, and these lengths may depend on the base point.  Thus one cannot directly compare, at a common time $n$, the measures of $n$-Bowen balls with the number of $n$-Bowen balls needed to cover a typical set.

A substantial body of work has shown that many local entropy structures---including rate-distortion functions, Katok entropy, Shapira entropy, and Brin--Katok local entropy---support variational principles for metric mean dimension.  The remaining missing piece is the lower Brin--Katok local entropy.  This gap led Shi to formulate two natural questions:

\begin{center}
\textit{%
Does the metric mean dimension variational principle remain true with lower Brin--Katok local entropy? Does lower Brin--Katok local entropy control lower Katok covering entropy at a comparable scale?}
\end{center}

\subsection{Main results}

We answer both questions affirmatively.  In fact, we prove the stronger finite-scale comparison stated in Theorem~\ref{thm:finite-scale} below, and derive the corresponding variational principle in Theorem~\ref{thm:vp}. 

\begin{theorem}\label{thm:finite-scale}
For every ergodic invariant measure $\mu$, every $\epsilon>0$, and every $\delta\in(0,1)$,
\[
        h^K_\mu(6\epsilon,\delta)\leq\underline h^{BK}_\mu(\epsilon).
\]
\end{theorem}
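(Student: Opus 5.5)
The plan is to show that, for every $\gamma>0$, all sufficiently large $n$ admit a cover of a set of $\mu$-measure at least $1-\delta$ by at most $e^{n(h+C\gamma)}$ Bowen balls $B_n(\cdot,6\epsilon)$, where $h=\underline h^{BK}_\mu(\epsilon)$. Since this bounds even the $\limsup$ in $n$, it gives the claim whichever of $\liminf$ or $\limsup$ is used to define $h^K_\mu$. By ergodicity $\underline h^{BK}_\mu(x,\epsilon)=h$ for $\mu$-a.e. $x$, and we may assume $h<\infty$.

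\textbf{Step 1: uniformize the good times.} Because the definition uses a $\liminf$, for a.e.\ $y$ there are infinitely many $m$ with $\mu(B_m(y,\epsilon))\ge e^{-m(h+\gamma)}$. The key observation is that we do not need a common good time, only a good time in a bounded window. Fix a large $N$ and choose $M\ge N$ such that
\[
A=\{y:\ \exists\, m\in[N,M] \text{ with } \mu(B_m(y,\epsilon))\ge e^{-m(h+\gamma)}\}
\]
satisfies $\mu(A)>1-\gamma$. This is possible since the sets increase to a full-measure set as $M\to\infty$. Let $A_m\subset A$ be the set of points for which $m$ is a good time.

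\textbf{Step 2: cover each block.} For each $m\in[N,M]$, take a maximal subset $C_m\subset A_m$ that is $(>2\epsilon)$-separated in $d_m$. The balls $B_m(c,\epsilon)$ with $c\in C_m$ are pairwise disjoint and each has measure at least $e^{-m(h+\gamma)}$. Hence $|C_m|\le e^{m(h+\gamma)}$, and by maximality every point of $A_m$ lies within $d_m$-distance $2\epsilon$ of $C_m$. Fix also a finite cover of $\XX$ by $K$ balls of radius $\epsilon$, which will handle single bad times.

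\textbf{Step 3: concatenate along orbits.} By the ergodic theorem, for large $n$ the set of $x$ whose orbit visits $A$ with frequency at least $1-2\gamma$ up to time $n$ has measure at least $1-\delta$. For such $x$, scan $[0,n)$ greedily as follows.
\begin{itemize}
\item If $T^t x\in A$, use a block $[t,t+m)$ with $m\in[N,M]$ a good time of $T^t x$.
\item Otherwise, use the singleton $\{t\}$.
\item Truncate the tail of length at most $M$, treating it as singletons.
\end{itemize}
The number of block structures (start positions and lengths) is at most $e^{n\,c(N)}$ with $c(N)\to0$ as $N\to\infty$, because there are at most $n/N$ block starts. Given the structure, the choice of a center in $C_m$ for each block, or one of the $K$ balls for each singleton, contributes at most
\[
e^{n(h+\gamma)}\,K^{(2\gamma+M/n)n}.
\]
All points with the same structure and choices form a set of $d_n$-diameter at most $4\epsilon$. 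Any such set meeting our good set lies in a $d_n$-ball of radius $4\epsilon\le 6\epsilon$ centered at one of its points, and there is slack for boundary effects. The number of balls is therefore at most $e^{n(h+\gamma+c(N)+3\gamma\log K)}$. Letting $n\to\infty$, then $N\to\infty$, then $\gamma\to0$ gives $h^K_\mu(6\epsilon,\delta)\le h$.

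\textbf{Main obstacle.} The expected difficulty is Step 1 together with the bookkeeping in Step 3. Lower local entropy only provides point-dependent good times, and the bounded window $[N,M]$ is what makes the combinatorial count subexponential, via $c(N)\to0$. One must also check that the singleton factor is really $K^{O(\gamma n)}$ with $K$ independent of $\gamma$. This holds because $K$ depends only on $\epsilon$, while $M$ depends on $\gamma$ and $N$ but only enters through the vanishing tail term $M/n$.
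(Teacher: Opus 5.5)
The one real gap is the count of block structures, which is exactly the step the paper singles out as crucial. The rest of your proposal is the paper's argument and checks out. That includes the window $[N,M]$ with $\mu(A)>1-\gamma$, the maximal $2\epsilon$-separated sets $C_m$ whose disjoint $\epsilon$-balls give $|C_m|\le e^{m(h+\gamma)}$, the Birkhoff control of visits to $A$, the greedy variable-length coding, and ``same code implies $d_n$-close''. Your diameter bound $4\epsilon$ is even slightly sharper than the paper's $6\epsilon$. The $1-\delta$ versus $>\delta$ convention is immaterial, since your good set has measure tending to $1$.

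Knowing there are at most $n/N$ good-block starts bounds the choice of start positions by $\sum_{k\le n/N}\binom{n}{k}$, but it does not determine the structure. If you then record each length as a value in $[N,M]$, as ``start positions and lengths'' suggests, you pay an extra factor $(M-N+1)^{n/N}=e^{n\log(M-N+1)/N}$. Now $M=M(N,\gamma)$ is chosen only to force $\mu(A)>1-\gamma$, and nothing controls how sparse the point-dependent good times are. So $M$ can grow arbitrarily fast in $N$, and $\log(M-N+1)/N$ need not tend to $0$. Then $c(N)\to0$ fails, and your claim that $M$ enters only through the tail term $M/n$ is unjustified. Your ``Main obstacle'' paragraph has the roles reversed. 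The upper end $M$ of the window is what makes the tail $o(n)$. It is the lower end $N$, not the finiteness of the window, that has to make the structure count subexponential.

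The fix is to encode lengths by positions rather than by values. Besides the set of start times of good blocks, record the set of their end times. Both are subsets of $\{0,1,\dots,n\}$ with at most $n/N$ elements. The $i$-th start pairs with the $i$-th end, and every remaining time is a singleton. This gives at most $\bigl(\sum_{k\le (n+1)/N}\binom{n+1}{k}\bigr)^2\le e^{2(n+1)H(1/N)}$ structures, where $H(u)=-u\log u-(1-u)\log(1-u)$. So $c(N)=2H(1/N)$ is independent of $M$ and $\gamma$.

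The paper instead records all cut positions $0=t_0<\dots<t_k=n$, with $k\le n/N+2\alpha n+L+1$, together with good/bad labels. That gives the rate $\eta(1/N+2\alpha)$, which vanishes after $N\to\infty$ and then $\alpha\downarrow0$. With either repair your proof goes through as written.
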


Since $\underline h^K_\mu(6\epsilon,\delta)\leq h^K_\mu(6\epsilon,\delta)$ by definition, Theorem~\ref{thm:finite-scale} immediately yields the desired lower-Katok inequality.  Combining the estimate with the Katok variational principle for metric mean dimension gives our second main result.

\begin{theorem}\label{thm:vp}
For every compact metric topological dynamical system $(\XX,d,T)$,
\[
        \overline{\mdim}_M(\XX,d,T)
        =
        \limsup_{\epsilon\downarrow0}
        \frac{1}{\log(1/\epsilon)}
        \sup_{\mu\in\EE_T(\XX)}\underline h^{BK}_\mu(\epsilon),
\]
and
\[
        \underline{\mdim}_M(\XX,d,T)
        =
        \liminf_{\epsilon\downarrow0}
        \frac{1}{\log(1/\epsilon)}
        \sup_{\mu\in\EE_T(\XX)}\underline h^{BK}_\mu(\epsilon).
\]
\end{theorem}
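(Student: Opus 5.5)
The plan is to sandwich the normalized supremum of $\underline h^{BK}_\mu(\epsilon)$ between two quantities that are already known to converge to the metric mean dimensions. The lower bound will come from Theorem~\ref{thm:finite-scale} combined with the Katok variational principle. The upper bound will come from the trivial comparison $\underline h^{BK}_\mu(\epsilon)\le h^{BK}_\mu(\epsilon)$ combined with Shi's Brin--Katok variational principle \cite{Shi21}. I will treat the $\limsup$ and $\liminf$ statements in parallel, since every step below is monotone in the relevant quantity and so passes through either limit.

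\emph{Upper bound.} For ergodic $\mu$ the functions $x\mapsto \underline h^{BK}_\mu(x,\epsilon)$ and $x\mapsto h^{BK}_\mu(x,\epsilon)$ are a.e.\ constant, because they are $T$-invariant up to the usual comparison of $B_n(x,\epsilon)$ with $B_{n-1}(Tx,\epsilon)$. Their constant values satisfy $\underline h^{BK}_\mu(\epsilon)\le h^{BK}_\mu(\epsilon)$ pointwise. Taking suprema over $\EE_T(\XX)$, dividing by $\log(1/\epsilon)$ for $\epsilon<1$, and applying $\limsup_{\epsilon\downarrow0}$ (respectively $\liminf$), Shi's result from \cite{Shi21} identifies the right-hand side with $\overline{\mdim}_M(\XX,d,T)$ (respectively $\underline{\mdim}_M(\XX,d,T)$). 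This yields the inequality ``$\ge$'' in both displays of Theorem~\ref{thm:vp}.

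\emph{Lower bound.} Fix $\delta\in(0,1)$. Theorem~\ref{thm:finite-scale} gives, after taking suprema over ergodic measures,
\[
\sup_{\mu\in\EE_T(\XX)}h^K_\mu(6\epsilon,\delta)\le \sup_{\mu\in\EE_T(\XX)}\underline h^{BK}_\mu(\epsilon)
\]
for every $\epsilon>0$. Divide both sides by $\log(1/\epsilon)$ and write
\[
\frac{1}{\log(1/\epsilon)}=\frac{\log(1/(6\epsilon))}{\log(1/\epsilon)}\cdot\frac{1}{\log(1/(6\epsilon))}.
\]
The first factor tends to $1$ as $\epsilon\downarrow0$. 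The map $\epsilon\mapsto\epsilon'=6\epsilon$ is a bijection of small positive scales. Hence
\[
\limsup_{\epsilon\downarrow0}\frac{\sup_\mu h^K_\mu(6\epsilon,\delta)}{\log(1/\epsilon)}
=\limsup_{\epsilon'\downarrow0}\frac{\sup_\mu h^K_\mu(\epsilon',\delta)}{\log(1/\epsilon')},
\]
and the same identity holds with $\liminf$. The Katok variational principle of \cite{VV17,Shi21}, in its version with a supremum over ergodic measures and a fixed $\delta$, identifies this with $\overline{\mdim}_M$ (respectively $\underline{\mdim}_M$). This gives the inequality ``$\le$'' in both displays.

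\emph{Expected obstacle.} The only delicate point is the rescaling step when the quantities are infinite or the ratio is unbounded. To handle it I will use the elementary fact that if $a(\epsilon)\ge0$ and $c(\epsilon)\to1$, then $\limsup_{\epsilon\downarrow0} c(\epsilon)a(\epsilon)=\limsup_{\epsilon\downarrow0} a(\epsilon)$, and likewise for $\liminf$. This holds in $[0,\infty]$, so the case of infinite metric mean dimension needs no separate treatment.

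A second point to check is that the cited Katok and Brin--Katok principles are stated in exactly the form needed. Concretely, they should use the supremum over $\EE_T(\XX)$, the same convention for $h^K_\mu(\epsilon,\delta)$ (upper, i.e.\ $\limsup$ in $n$), and the same radius convention for Bowen balls as in Theorem~\ref{thm:finite-scale}. If the Brin--Katok principle in \cite{Shi21} were available only for the $\limsup$ version of the metric mean dimension, I would instead prove the upper bound directly. The route would be the standard estimate $h^{BK}_\mu(\epsilon)\le S(\epsilon/2)$ along a subsequence, via covering a set of large measure by $\text{Cov}(n,\epsilon/2)$ balls, together with the analogous inequality with $\underline S$ and the same rescaling argument.
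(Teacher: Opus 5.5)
Your proposal is correct and follows the paper's proof: the lower bound comes from Theorem~\ref{thm:finite-scale} combined with the Katok variational principle of \cite{Shi21} and the harmless rescaling $\epsilon\mapsto 6\epsilon$, and the upper bound comes from $\underline h^{BK}_\mu(\epsilon)\le h^{BK}_\mu(\epsilon)$. The only difference is that you invoke Shi's full Brin--Katok variational principle where the paper uses the classical inequality $h^{BK}_\mu(\epsilon)\le S(\epsilon/2)$; your version has the minor advantage of handling the $\liminf$ (lower metric mean dimension) case directly, without a separate comparison between $S$ and $\underline S$.
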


\begin{proof}
\textit{Lower bound.}  By Theorem~\ref{thm:finite-scale},
$\sup_{\mu\in\EE_T(\XX)}h^K_\mu(6\epsilon,\delta)\leq\sup_{\mu\in\EE_T(\XX)}\underline h^{BK}_\mu(\epsilon)$
for every $\delta\in(0,1)$.  The variational principle for metric mean dimension in term of Katok entropy \cite{Shi21} therefore yields
\[
        \overline{\mdim}_M(\XX,d,T)
        \leq\limsup_{\epsilon\downarrow0}\frac{\sup_{\mu\in\EE_T(\XX)}\underline h^{BK}_\mu(\epsilon)}{\log(1/\epsilon)}
        \quad\text{and}\quad
        \underline{\mdim}_M(\XX,d,T)
        \leq\liminf_{\epsilon\downarrow0}\frac{\sup_{\mu\in\EE_T(\XX)}\underline h^{BK}_\mu(\epsilon)}{\log(1/\epsilon)}.
\]
The constant factor $6$ in the scale is harmless because $\log(1/(6\epsilon))/\log(1/\epsilon)\to1$.

\textit{Upper bound.}  By definition $\underline h^{BK}_\mu(\epsilon)\leq h^{BK}_\mu(\epsilon)$, and the classical Brin--Katok inequality gives $h^{BK}_\mu(\epsilon)\leq S(\epsilon/2)$ for every $\mu\in\EE_T(\XX)$.  Taking supremum over $\mu$ and letting $\epsilon\downarrow0$ yields the reverse inequalities.
\end{proof}

Theorem~\ref{thm:vp} shows that lower Brin--Katok local entropy gives a complete finite-scale local entropy formulation of metric mean dimension, answering Shi's first question.  Theorem~\ref{thm:finite-scale} answers the second question in the stronger form with upper Katok entropy.

Since Theorem~\ref{thm:finite-scale} is available, the lower Brin--Katok formulation can be extended in the same directions as the previous local entropy frameworks.  In particular, the corresponding variational principles for countable amenable group actions \cite{CDZ22}, relative metric mean dimension with potential \cite{Wu23}, Bowen metric mean dimension for subsets \cite{Wang21}, and random dynamical systems \cite{WCY23} all carry over to the lower Brin--Katok setting with essentially the same proofs.
\subsection*{Idea of proof}

The proof of Theorem~\ref{thm:finite-scale} is based on a variable-length coding argument.  Given $a>\underline h^{BK}_\mu(\epsilon)$, let $E_m$ be the set of points whose $m$-Bowen ball has measure larger than $e^{-am}$.  For every large starting threshold $N$, the union $\bigcup_{m\ge N}E_m$ has full measure, hence a finite subunion $A=\bigcup_{m=N}^L E_m$ has measure close to one.  Birkhoff's theorem implies that most long orbit segments visit $A$ with high frequency.  Whenever the orbit is in $A_m\subset E_m$, we encode the next $m$ iterates by one of at most $e^{am}$ Bowen balls; the remaining exceptional times are encoded by one-step balls.  The number of exceptional symbols is small, and the number of good blocks is at most $n/N$.

A small but important point is the counting of variable-length decompositions.  The auxiliary integer $L$ may grow rapidly with $N$, so one should not pay a factor such as $(L+1)^{\#\text{blocks}}$.  Instead one counts the cut positions in $\{0,1,\dots,n\}$.  The resulting combinatorial overhead depends only on the density of blocks, namely approximately $1/N+2\alpha$, and vanishes after $N\to\infty$ and $\alpha\downarrow0$.  This gives the finite-scale estimate above.  The constant $6$ is only a consequence of elementary triangle-inequality losses in the proof and is not meant to be sharp.

\subsection*{Organization}

The paper is organized as follows.  Section~2 recalls the necessary notation and definitions.  Section~3 proves Theorem~\ref{thm:finite-scale} by the variable-length coding argument.
\section{Notation and preliminaries}

Throughout, $(\XX,d,T)$ is a compact metric topological dynamical system.  We write
\[
        d_n(x,y)=\max_{0\leq j<n} d(T^j x,T^j y),
        \qquad
        B_n(x,r)=\{y\in \XX: d_n(x,y)<r\},
\]
and denote by $\EE_T(\XX)$ the set of ergodic $T$-invariant probability measures on $\XX$.

\smallskip
\noindent\textit{Measure-theoretic entropies.}
For $\mu\in\EE_T(\XX)$, the pointwise lower and upper Brin--Katok local entropies at scale $r$ are
\[
        \underline h^{BK}_\mu(x,r)=\liminf_{n\to\infty}-\frac1n\log\mu(B_n(x,r))
        \quad\text{and}\quad
        h^{BK}_\mu(x,r)=\limsup_{n\to\infty}-\frac1n\log\mu(B_n(x,r)).
\]
For ergodic $\mu$, both quantities are almost everywhere constant; their a.e.\ constant values are denoted by $\underline h^{BK}_\mu(r)$ and $h^{BK}_\mu(r)$, respectively.
For $\delta\in(0,1)$, let $N^\delta_\mu(n,r)$ be the least number of $d_n$-balls of radius $r$ whose union has $\mu$-measure larger than $\delta$, and put
\[
        h^K_\mu(r,\delta)=\limsup_{n\to\infty}\frac1n\log N^\delta_\mu(n,r),
        \qquad
        \underline h^K_\mu(r,\delta)=\liminf_{n\to\infty}\frac1n\log N^\delta_\mu(n,r).
\]

\smallskip
\noindent\textit{Topological quantities.}
Let $\text{\rm Cov}(n,r)$ denote the least number of $d_n$-balls of radius $r$ needed to cover $\XX$, and define the finite-scale topological entropies
\[
        S(r)=\limsup_{n\to\infty}\frac1n\log \text{\rm Cov}(n,r)
        \quad\text{and}\quad
        \underline S(r)=\liminf_{n\to\infty}\frac1n\log \text{\rm Cov}(n,r).
\]
The \emph{upper} and \emph{lower metric mean dimensions} are then
\[
        \overline{\mdim}_M(\XX,d,T)
        =\limsup_{r\downarrow0}\frac{S(r)}{\log(1/r)}
        \qquad\text{and}\qquad
        \underline{\mdim}_M(\XX,d,T)
        =\liminf_{r\downarrow0}\frac{\underline S(r)}{\log(1/r)}.
\]

A subset $F\subset\XX$ is called \emph{$(n,\epsilon)$-separated} if distinct points of $F$ have $d_n$-distance strictly larger than $\epsilon$.  An $(n,\epsilon)$-separated set $F\subset A$ is \emph{maximal in $A$} if no strictly larger superset of $F$ inside $A$ is $(n,\epsilon)$-separated; equivalently, $A\subset\bigcup_{x\in F}B_n(x,\epsilon)$.

\section[pdfstring={Proof of Theorem 1}]{Proof of Theorem~\ref{thm:finite-scale}}

Fix an ergodic measure $\mu$, a number $\epsilon>0$, and $\delta\in(0,1)$.  If $\underline h^{BK}_\mu(\epsilon)=+\infty$, there is nothing to prove.  Hence assume $\underline h^{BK}_\mu(\epsilon)<+\infty$, and fix an arbitrary number $a>\underline h^{BK}_\mu(\epsilon)$.

\subsection[pdfstring={Step 1. Construction of the good set A}]{Step 1.  Construction of the good set $A$}

For each integer $m\geq 1$ define
\[
        E_m=\left\{x\in\XX:
        \mu(B_m(x,\epsilon))>e^{-am}\right\}.
\]
By definition of the lower Brin--Katok entropy, for $\mu$-almost every $x\in\XX$ we have
\[
        \liminf_{m\to\infty}-\frac1m\log\mu(B_m(x,\epsilon))
        =\underline h^{BK}_\mu(\epsilon)<a.
\]
This means that for $\mu$-a.e. $x$, there exist infinitely many integers $m$ such that
\[
        -\frac1m\log\mu(B_m(x,\epsilon))<a,
        \quad\text{equivalently}\quad
        \mu(B_m(x,\epsilon))>e^{-am}.
\]
Consequently, for $\mu$-a.e. $x$, there exists some $m\geq 1$ with $x\in E_m$.  Therefore
\[
        \mu\left(\bigcup_{m\geq 1}E_m\right)=1.
\]
Since the sets $\bigcup_{m\geq N}E_m$ decrease to the intersection $\bigcap_{N\geq 1}\bigcup_{m\geq N}E_m$, which still has full measure, we obtain
\[
        \mu\left(\bigcup_{m\geq N}E_m\right)=1
        \quad\text{for every integer }N\geq 1.
\]

Now fix an auxiliary parameter $0<\alpha<1/8$.  Because $\mu$ is a probability measure and the union $\bigcup_{m\geq N}E_m$ has full measure, for each integer $N>4$ we can choose an integer $L=L(N)\geq N$ such that the finite union
\[
        A:=\bigcup_{m=N}^{L}E_m
\]
satisfies
\[
        \mu(A)>1-\alpha.
\]
We then choose a measurable partition of $A$ into sets
\[
        A=\bigsqcup_{m=N}^{L}A_m,
        \qquad\text{with }A_m\subset E_m\text{ for each }m.
\]

\subsection[pdfstring={Step 2. Covering each Am by Bowen balls}]{Step 2.  Covering each $A_m$ by Bowen balls}

Fix $m\in\{N,\dots,L\}$.  Choose a maximal $(m,2\epsilon)$-separated subset $F_m\subset A_m$.

\textbf{Claim 1.}  The Bowen balls $\{B_m(x,\epsilon):x\in F_m\}$ are pairwise disjoint, and
\[
        |F_m|\leq e^{am}.
\]

\textit{Proof of Claim 1.} Obviously, the balls are pairwise disjoint. On the other hand, since each $x\in F_m\subset A_m\subset E_m$, by definition of $E_m$ we have $\mu(B_m(x,\epsilon))>e^{-am}$.  Summing over $F_m$ and using disjointness gives
$|F_m|\leq e^{am}$.
\qed

\textbf{Claim 2.}  Define
\[
        \mathcal C_m:=\{B_m(x,3\epsilon):x\in F_m\}.
\]
Then $\mathcal C_m$ covers $A_m$.

\textit{Proof of Claim 2.}  Let $y\in A_m$ be arbitrary.  If $y\in F_m$, then $y\in B_m(y,3\epsilon)\in\mathcal C_m$ trivially.  If $y \notin F_m$, then by maximality of $F_m$ the set $F_m\cup\{y\}$ is no longer $(m,2\epsilon)$-separated.  Hence there exists some $x\in F_m$ with $d_m(x,y)\leq 2\epsilon<3\epsilon$, which means $y\in B_m(x,3\epsilon)\in\mathcal C_m$.
\qed

From Claims 1 and 2, each $A_m$ is covered by at most $|\mathcal C_m|=|F_m|\leq e^{am}$ many $m$-Bowen balls of radius $3\epsilon$.

\subsection[pdfstring={Step 3. Potential candidate Zn for computing hK}]{Step 3.  Potential candidate $Z_n$ for computing $h^K_\mu(6\epsilon,\delta)$}

Since $\XX$ is compact, we can choose a finite cover $\mathcal Q$ of $\XX$ by ordinary $d$-balls of radius $3\epsilon$.  Let $Q:=|\mathcal Q|$ denote the cardinality of this cover.
Define
\[
        Z_n:=\biggl\{x\in\XX:
        \#\{0\leq t<n:T^t x \notin A\}\leq 2\alpha n
        \biggr\}.
\]
By Birkhoff’s ergodic theorem, it follows that $\mu(Z_n)\to 1$ as $n\to\infty$.  In particular, since $\delta\in(0,1)$ is fixed, we have
\[
        \mu(Z_n)>\delta
        \quad\text{for all sufficiently large }n.
\]
Our goal is now to cover $Z_n$ by $d_n$-balls of radius $6\epsilon$.

\subsection{Step 4.  Variable-length encoding of orbits}

Fix $x\in Z_n$ and an integer $n$ large enough that $\mu(Z_n)>\delta$.  We construct a code for the orbit segment $(x,Tx,\dots,T^{n-1}x)$ by the following algorithm, which proceeds by advancing a clock variable $t$ starting from $t=0$. We define a variable-length coding as follows:

\medskip
\noindent\textbf{Algorithm.}
\begin{enumerate}
\item[(0)] Initialize $t:=0$ and initialize an empty list of blocks.
\item[(1)] If $t\geq n$, stop.
\item[(2)] If there exists $m\in\{N,\dots,L\}$ such that $T^t x\in A_m$ and $t+m\leq n$, then:
        \begin{itemize}
        \item choose arbitrarily one ball $B_m(y,3\epsilon)\in\mathcal C_m$ that contains $T^t x$;
        \item append to the list the pair $(m,B_m(y,3\epsilon))$, called a \emph{good block of length $m$};
        \item update $t:=t+m$;
        \item return to step (1).
        \end{itemize}
\item[(3)] Otherwise (i.e., either $T^t x \notin A$, or $T^t x\in A_m$ but $t+m>n$):
        \begin{itemize}
        \item choose arbitrarily one ball $U\in\mathcal Q$ that contains $T^t x$;
        \item append to the list the pair $(1,U)$, called a \emph{bad block of length $1$};
        \item update $t:=t+1$;
        \item return to step (1).
        \end{itemize}
\end{enumerate}

Since $t$ strictly increases at each iteration and we stop when $t\geq n$, the algorithm terminates in finitely many steps.  The output is a finite sequence (code) consisting of blocks, each block being either {\it good} (with some length $m\in[N,L]$) or {\it bad} (with length $1$).

Figure~\ref{fig:coding} illustrates the variable-length coding procedure.

\begin{figure}[ht]
\centering
\begin{tikzpicture}[scale=0.85, every node/.style={font=\small}]
\draw[->, thick] (0,0) -- (13,0) node[right] {$t$};
\foreach \x in {0,1,2,3,4,5,6,7,8,9,10,11,12}
    \draw (\x,0.1) -- (\x,-0.1) node[below] {\x};

\node[below] at (12.5,-0.1) {$\cdots$};

\node[below] at (13,-0.6) {$n$};


\node[left] at (-0.5,1.2) {\textit{Orbit}};
\foreach \x/\c in {0/A,1/A,2/A,3/A,4/B,5/A,6/A,7/A,8/A,9/A,10/B,11/A,12/A} {
        \node at (\x+0.5,1.2) {\c};
    }


\node[left] at (-0.5,2.4) {\textit{Blocks}};
\fill[green!20] (0,2.1) rectangle (4,2.7);
\draw (0,2.1) rectangle (4,2.7);

\node at (2,2.4) {\textsf{G} ($m=4$)};
\fill[red!20] (4,2.1) rectangle (5,2.7);
\draw (4,2.1) rectangle (5,2.7);

\node at (4.5,2.4) {\textsf{B}};
\fill[green!20] (5,2.1) rectangle (11,2.7);
\draw (5,2.1) rectangle (11,2.7);

\node at (8,2.4) {\textsf{G} ($m=6$)};
\fill[red!20] (11,2.1) rectangle (12,2.7);
\draw (11,2.1) rectangle (12,2.7);

\node at (11.5,2.4) {\textsf{B}};


\node[left] at (-0.5,3.6) {\textit{Code}};

\node at (2,3.6) {$B_4(y_1,3\epsilon)$};

\node at (4.5,3.6) {$U_1$};

\node at (8,3.6) {$B_6(y_2,3\epsilon)$};

\node at (11.5,3.6) {$U_2$};


\node[right] at (0,-1.2) {\tikz\fill[green!20] (0,0) rectangle (0.4,0.3);~~good block};

\node[right] at (4,-1.2) {\tikz\fill[red!20] (0,0) rectangle (0.4,0.3);~~bad block};

\node[right] at (7.5,-1.2) {A$\,=\bigcup_{m=N}^L A_m$};

\node[right] at (10.5,-1.2) {B$\,=A^c$};
\end{tikzpicture}
\caption{Variable-length coding of an orbit segment.  The first row records whether $T^t x$ lies in the good set $A$ (labelled~A) or in its complement (labelled~B).  The second row shows the resulting block decomposition: green rectangles are good blocks, red rectangles are bad blocks.  The third row records the covering ball chosen for each block.}
\label{fig:coding}
\end{figure}

\subsection{Step 5.  Counting bad blocks and total blocks}

\textbf{Claim 3 (number of bad blocks).}  For any $x\in Z_n$, the number of bad blocks produced by the algorithm is at most $2\alpha n+L$.

\textit{Proof of Claim 3.}  A bad block is created in step (3) of the algorithm.  This occurs in exactly two mutually exclusive situations:
\begin{itemize}
\item[(i)] $T^t x \notin A$.  For $x\in Z_n$, the number of times $t\in\{0,1,\dots,n-1\}$ with $T^t x \notin A$ is by definition at most $2\alpha n$.
\item[(ii)] $T^t x\in A_m$ for some $m\in\{N,\dots,L\}$ but $t+m>n$.  In this case $t>n-m\geq n-L$.  Since $t\in\{0,1,\dots,n-1\}$, the number of such $t$ is at most $L$.
\end{itemize}
Summing the two contributions yields at most $2\alpha n+L$ bad blocks.
\qed

\textbf{Claim 4 (number of good blocks).}  The number of good blocks is at most $\frac{n}{N}$.

\textit{Proof of Claim 4.}  Each good block has length at least $N$ by construction, and the sum of the lengths of all blocks equals exactly $n$ (the algorithm covers precisely the time interval $[0,n)$).  If there are $g$ good blocks with lengths $m_1,\dots,m_g\geq N$, then
\[
        n\geq\sum_{i=1}^g m_i\geq gN,
\]
so $g\leq n/N$.
\qed

Consequently, the total number of blocks is at most $n/N+2\alpha n+L+1$.

\subsection{Step 6.  Counting the number of possible codes}

We now estimate how many distinct codes can arise from all points $x\in Z_n$.  A code consists of three pieces of information:
\begin{enumerate}
\item[(a)] the sequence of block lengths,
\item[(b)] for each block, whether it is good or bad,
\item[(c)] for each block, which covering ball was chosen.
\end{enumerate}
We estimate the number of possibilities for each component.

\medskip
\noindent\textbf{Counting (c): the covering choices.}

For a good block of length $m$, the chosen ball comes from $\mathcal C_m$, which has cardinality at most $e^{am}$.  Hence the total number of possibilities for all good blocks is at most
\[
       |\{\text{Choice of good blocks}\}|\le \prod_{\text{good blocks of length }m}e^{am}\le  e^{an}.
\]
The last inequality follows from the fact that the sum of good block lengths is at most $n$. 

For each bad block, the chosen ball comes from $\mathcal Q$, which has cardinality $Q$.  Since there are at most $2\alpha n+L$ bad blocks, the bad-block covering choices contribute at most
\[
        |\{\text{Choice of bad blocks}\}|\le Q^{2\alpha n+L}.
\]

\medskip
\noindent\textbf{Counting (a)--(b): the block decompositions.}

This is the crucial combinatorial step.  Instead of trying to count sequences of lengths $m\in[N,L]$ directly (which would incur a factor involving $L$), we count the \emph{cut positions}---the times at which one block ends and the next begins.

Let $k$ denote the total number of blocks.  By Step 5,
\[
        k\leq\frac{n}{N}+2\alpha n+L+1.
\]
Define
\[
        \beta_n:=\frac{1}{N}+2\alpha+\frac{L+1}{n}.
\]

The cut positions are integers $0=t_0<t_1<\dots<t_k=n$ such that block $i$ occupies the time interval $[t_{i-1},t_i)$.  Specifying the set of cut positions $\{t_1,\dots,t_{k-1}\}$ is equivalent to specifying a subset of $\{1,2,\dots,n-1\}$ of cardinality $k-1$.  For fixed $k$, the number of such subsets is $\binom{n-1}{k-1}\leq\binom{n}{k}$.

Additionally, each block must be labeled as good or bad.  For $k$ blocks there are $2^k$ possible labelings.

Therefore, for a fixed $k$, the number of block decompositions with exactly $k$ blocks is at most
\[
        \binom{n}{k}\cdot 2^k.
\]
Summing over all integers $k$ with $0\leq k\leq\lfloor\beta_n n\rfloor$, the total number of block decompositions is bounded by
\[
        \sum_{k=0}^{\lfloor\beta_n n\rfloor}\binom{n}{k}2^{k}.
\]

\medskip
\noindent\textbf{Claim 5 (asymptotics of the combinatorial factor).}  Let
\[
        \eta(u):=-u\log u-(1-u)\log(1-u)+u\log 2
        \quad\text{for }0<u<\tfrac12.
\]
Then $\eta(u)\to 0$ as $u\downarrow 0$, and for fixed $N,\alpha,L$ we have
\[
        \limsup_{n\to\infty}\frac1n
        \log\left(\sum_{k\leq\beta_n n}\binom{n}{k}2^{k}\right)
        \leq\eta\left(\frac1N+2\alpha\right).
\]

\textit{Proof of Claim 5.}  Since $\beta_n\to\frac1N+2\alpha$ as $n\to\infty$ and $\frac1N+2\alpha<\frac14+\frac14=\frac12$ (because $N>4$ and $\alpha<1/8$ by assumption), we may assume $n$ is large enough that $\beta_n<1/2$.

For any $u\in(0,1/2)$, Stirling's approximation gives the standard binomial bound
\[
        \sum_{k\leq un}\binom{n}{k}\leq\exp(n\,H(u)+o(n)),
\]
where $H(u)=-u\log u-(1-u)\log(1-u)$ is the binary entropy function.  Multiplying by $2^{un}=\exp((u\log 2)\cdot n+o(n))$ gives
\[
        \sum_{k\leq un}\binom{n}{k}2^{k}
        \leq\exp\bigl(n(H(u)+u\log 2)+o(n)\bigr)
        =\exp(n\,\eta(u)+o(n)).
\]
Taking $u=\beta_n$ and letting $n\to\infty$ yields the claimed bound.
\qed

The crucial observation is that this combinatorial overhead depends only on $\frac1N+2\alpha$, and is completely independent of $L$.  Since $L$ may grow very rapidly with $N$, any estimate involving $L$ directly (such as $(L+1)^{\#\text{blocks}}$) would not vanish in the limit.

\subsection[pdfstring={Step 7. Same code implies dn-proximity}]{Step 7.  Same code implies $d_n$-proximity}

\textbf{Claim 6.}  If two points $x,y\in Z_n$ produce the same code, then $d_n(x,y)<6\epsilon$.

\textit{Proof of Claim 6.}  Suppose $x$ and $y$ have the same code.  Then their orbit segments are decomposed into exactly the same sequence of blocks.  Consider a single block starting at time $t$.

\textit{Case 1: good block of length $m$.}  By definition of the code, both $T^t x$ and $T^t y$ belong to the same ball $B_m(z,3\epsilon)\in\mathcal C_m$ for some $z\in F_m$.  Hence
\[
        d_m(T^t x,T^t y)
        \leq d_m(T^t x,z)+d_m(z,T^t y)
        <3\epsilon+3\epsilon=6\epsilon.
\]
By definition of $d_m$, this means that for every $j\in\{0,1,\dots,m-1\}$,
\[
        d(T^{t+j}x,T^{t+j}y)<6\epsilon.
\]

\textit{Case 2: bad block of length $1$.}  By definition, both $T^t x$ and $T^t y$ belong to the same ball $U\in\mathcal Q$, which has $d$-radius $3\epsilon$.  Hence
\[
        d(T^t x,T^t y)<6\epsilon.
\]

Since every time $s\in\{0,1,\dots,n-1\}$ belongs to exactly one block, and in either case we have $d(T^s x,T^s y)<6\epsilon$, we conclude
\[
        d_n(x,y)=\max_{0\leq s<n}d(T^s x,T^s y)<6\epsilon.
\]
This completes the proof of Claim 6.
\qed

It follows from Claim 6 that each nonempty set of points in $Z_n$ sharing the same code is contained in a single $d_n$-ball of radius $6\epsilon$.

\subsection{Step 8.  Assembling the estimate and taking limits}

Combining the counting estimates from Step 6 with Claim 6, we deduce that $Z_n$ can be covered by at most
\[
        e^{an}\cdot Q^{2\alpha n+L}\cdot
        \sum_{k\leq\beta_n n}\binom{n}{k}2^{k}
\]
many $d_n$-balls of radius $6\epsilon$.  Since $\mu(Z_n)>\delta$ for all sufficiently large $n$, the definition of $N^\delta_\mu(n,6\epsilon)$ gives
\[
        N^\delta_\mu(n,6\epsilon)
        \leq
        \exp(an)\cdot Q^{2\alpha n+L}
        \cdot\sum_{k\leq\beta_n n}\binom{n}{k}2^{k}
\]
for all large $n$.

Taking logarithms, dividing by $n$, and letting $n\to\infty$, we obtain
\[
        \limsup_{n\to\infty}\frac1n\log N^\delta_\mu(n,6\epsilon)
        \leq a+2\alpha\log Q+\eta\left(\frac1N+2\alpha\right).
\]
By definition of Katok entropy, the left-hand side is exactly $h^K_\mu(6\epsilon,\delta)$.  Hence
\[
        h^K_\mu(6\epsilon,\delta)
        \leq a+2\alpha\log Q+\eta\left(\frac1N+2\alpha\right).
\]

We now perform the limiting procedure in three stages:
\begin{enumerate}
\item[(i)] First let $N\to\infty$.  This causes $\frac1N\to 0$, so the argument of $\eta$ tends to $2\alpha$.
\item[(ii)] Then let $\alpha\downarrow 0$.  Since $\eta(u)\to 0$ as $u\downarrow 0$ and $Q$ is independent of $\alpha$, we obtain
        \[
                h^K_\mu(6\epsilon,\delta)\leq a.
        \]
\item[(iii)] Finally, recall that $a>\underline h^{BK}_\mu(\epsilon)$ was arbitrary.  Letting $a\downarrow\underline h^{BK}_\mu(\epsilon)$ yields
        \[
                h^K_\mu(6\epsilon,\delta)\leq\underline h^{BK}_\mu(\epsilon).
        \]
\end{enumerate}
This proves Theorem \ref{thm:finite-scale}.

\section*{Acknowledgements}

The author thanks Weisheng Wu for bringing the attention back to the questions raised in \cite{Shi21} during his visit to Fudan University in March, which led the author to reconsider the problem.

\end{document}